\newtheorem{theorem}{Theorem}
\newtheorem{lemma}{Lemma}
\newtheorem{remark}{Remark}
\title{ Infinite Primes From Integer Partitions}
\begin{document}

\author{Anton Shakov}

\maketitle

\begin{abstract}
\href{https://mathworld.wolfram.com/FerrersDiagram.html}{Ferrers diagrams} \cite{weisstein2003ferrers} are used to visually represent integer partitions. We describe a way to use Ferrers diagrams to uniquely represent integers in terms of their prime factors. This leads to a lower bound on the number of primes less than a given integer, namely $\pi(x) \geq \frac{\lfloor \lg x \rfloor}{\lg (\lfloor \lg x \rfloor + 1)}$ where $\pi(x)$ is the prime counting function and $\lg(x)$ denotes the base 2 logarithm. This results in a new proof of the infinitude of primes.
\end{abstract}

\section{Introduction}

First, we give examples of  a few Ferrers diagrams corresponding to integer partitions. The integer $3$ can be partitioned in three ways as $1 + 1 + 1, \ 1 + 2,$ and $3$. The Ferrers diagrams corresponding to these partitions will be 

$$\begin{tikzpicture}[scale=0.2]
   \draw (0,0) rectangle +(1,1);
   \draw (0,1) rectangle +(1,1);
   \draw (0,2) rectangle +(1,1);
\end{tikzpicture}, \
\begin{tikzpicture}[scale=0.2]
   \draw (0,0) rectangle +(1,1);
   \draw (1,0) rectangle +(1,1);
   \draw (0,1) rectangle +(1,1);
\end{tikzpicture}, \text{ and }
\begin{tikzpicture}[scale=0.2]
   \draw (0,0) rectangle +(1,1);
   \draw (1,0) rectangle +(1,1);
   \draw (2,0) rectangle +(1,1);
\end{tikzpicture}
$$

respectively. We use the Fundamental Theorem of Arithmetic to construct a bijection $\mathcal{F}$ from the natural numbers $\mathbf{N} = \{1, 2, 3, \cdots \}$ to Ferrers diagrams. For brevity, we often refer to Ferrers diagrams simply as figures. We then use $\mathcal{F}$ to define a partial ordering on the set of Ferrers diagrams $\mathcal{F}(\mathbf{N})$ from which we are able to easily extract the estimate $\pi(x) \geq \frac{\lfloor \lg x \rfloor}{\lg (\lfloor \lg x \rfloor + 1)}$. Since the function $\frac{\lfloor \lg x \rfloor}{\lg (\lfloor \lg x \rfloor + 1)}$ is unbounded as $x \to \infty$, it follows that the number of primes must be infinite.

\section{The bijection $\mathcal{F}(n)$}

Let $n \in \mathbf{N}$. We describe how to draw the figure $\mathcal{F}(n)$. 

The Fundamental Theorem of Arithmetic states that $n$ has a unique decomposition into prime numbers $p_\alpha$ up to the order in which we multiply.

\begin{equation}\label{decomp}
 n = p_{\alpha_1}^{\beta_1} p_{\alpha_2}^{\beta_2} \cdots p_{\alpha_s}^{\beta_s}
\end{equation}

The $\alpha_i$ denotes the index of each prime as they appear in the set $\mathbf{N}$. We have $p_1 = 2$, $p_2 = 3$, $p_3 = 5$, etc. To avoid ambiguity in drawing the Ferrers diagram $\mathcal{F}(n)$, assume $\alpha_i < \alpha_{i+1}$.

We construct the corresponding Ferrers diagram. For each prime $p_{\alpha_i}$ we stack a row of $\alpha_i$ squares for each instance that $p_{\alpha_i}$ divides $n$. Equation~\ref{decomp} instructs us to stack $\beta_i$ rows of length $\alpha_i$ for $i \leq s$.

Therefore the general figure $\mathcal{F}(n)$ is drawn as

$$\begin{tikzpicture}[scale=0.15]

    % Size of a large square
    \def\squareSize{3.2} 

    % Size of a small square inside the larger squares
    \def\smallSquareSize{0.8}

    % Vertical separation between rectangles
    \def\vsep{2.5}
    \def\firstVsep{1} % Separation between first and second

    % Initial vertical position
    \def\initY{0}

    % Widths (alpha values) for rectangles
    \def\alphas{2,4,8}

    % For each alpha value
    \foreach \width [count=\i] in \alphas {

        % Set the start Y position
        \ifnum\i=2
            \pgfmathsetmacro\startY{\initY - (\i-1)*\squareSize*\firstVsep - (\i-1)*\squareSize*3}
        \else
            \pgfmathsetmacro\startY{\initY - (\i-1)*\squareSize*\vsep - (\i-1)*\squareSize*3}
        \fi

        % Draw larger squares with 4x4 smaller squares inside
        \foreach \x in {1,...,\width} {
            \foreach \y in {1,2,3} {
                % Draw the outer square
                \draw (\x*\squareSize-\squareSize, \startY-\y*\squareSize+\squareSize) rectangle (\x*\squareSize, \startY-\y*\squareSize);

                % Draw the 4x4 grid inside the outer square
                \foreach \a in {1,2,3} {
                    \draw (\x*\squareSize-\squareSize, \startY-\y*\squareSize+\a*\smallSquareSize) -- (\x*\squareSize, \startY-\y*\squareSize+\a*\smallSquareSize);
                    \draw (\x*\squareSize-\squareSize+\a*\smallSquareSize, \startY-\y*\squareSize) -- (\x*\squareSize-\squareSize+\a*\smallSquareSize, \startY-\y*\squareSize+\squareSize);
                }
            }
        }

        % Labels for the rectangles

        \ifnum\i=3
            \node[above] at (\width*\squareSize/2, \startY) {$\alpha_s$};
            \node[left] at (0, \startY-1.5*\squareSize) {$\beta_s$};
        \else
            \node[above] at (\width*\squareSize/2, \startY) {$\alpha_{\i}$};
            \node[left] at (0, \startY-1.5*\squareSize) {$\beta_{\i}$};
        \fi
    }

    % Calculate the position for the ellipsis
    \pgfmathsetmacro\dotY{\initY - 2*3*\squareSize - \firstVsep - \vsep + 1.5*\squareSize}

    % Draw the ellipsis
    \foreach \i in {1,2,3}
    {
        \fill (2*\squareSize, 42*\dotY - \i*1cm) circle (0.2);
    }

\node at (2,-11) {$\uparrow$};
\node at (3,-11) {$\downarrow$};
\node at (2,-24) {$\uparrow$};
\node at (3,-24) {$\downarrow$};
\node at (2,-33) {$\uparrow$};
\node at (3,-33) {$\downarrow$};

\end{tikzpicture}$$

\

The set of all Ferrers diagrams can therefore be written as $\mathcal{F}(\mathbf{N})$. The fact that $\mathcal{F}$ is a bijection follows immediately from the uniqueness of a number's decomposition into primes. We draw the figures $\mathcal{F}(n)$ for $n \leq 10$.

\begin{table}[H]
\renewcommand{\arraystretch}{2.15}
\centering
\begin{tabular}{|c|c|>{\centering\arraybackslash}p{3cm}|}
\hline
$n$ & Prime factors of $n$ & $\mathcal{F}(n)$ \\
\hline
1 & - & $\emptyset$  \\
2 & $p_{_1} $  & \begin{tikzpicture}[scale=0.2]
   \draw (0,0) rectangle +(1,1);
\end{tikzpicture}  \\ 
3 & $p_{_2}$ & \begin{tikzpicture}[scale=0.2]
   \draw (0,0) rectangle +(1,1);
   \draw (1,0) rectangle +(1,1);
\end{tikzpicture}  \\
4 & $p_{_1}^{_2}$ & \begin{tikzpicture}[scale=0.2]
   \draw (0,0) rectangle +(1,1);
   \draw (0,1) rectangle +(1,1);
\end{tikzpicture} \\
5 & $p_{_3}$ & \begin{tikzpicture}[scale=0.2]
   \draw (0,0) rectangle +(1,1);
   \draw (1,0) rectangle +(1,1);
   \draw (2,0) rectangle +(1,1);
\end{tikzpicture} \\
6 & $p_{_1}p_{_2}$ & \begin{tikzpicture}[scale=0.2]
   \draw (0,0) rectangle +(1,1);
   \draw (1,0) rectangle +(1,1);
   \draw (0,1) rectangle +(1,1);
\end{tikzpicture}  \\
7 & $p_{_4}$ & \begin{tikzpicture}[scale=0.2]
   \draw (0,0) rectangle +(1,1);
   \draw (1,0) rectangle +(1,1);
   \draw (2,0) rectangle +(1,1);
   \draw (3,0) rectangle +(1,1);
\end{tikzpicture}  \\
8 & $p_{_1}^{_3}$ & \begin{tikzpicture}[scale=0.2]
   \draw (0,0) rectangle +(1,1);
   \draw (0,1) rectangle +(1,1);
   \draw (0,2) rectangle +(1,1);
\end{tikzpicture} \\
9 & $p_{_2}^{_2}$ & \begin{tikzpicture}[scale=0.2]
   \draw (0,0) rectangle +(1,1);
   \draw (0,1) rectangle +(1,1);
   \draw (1,0) rectangle +(1,1);
   \draw (1,1) rectangle +(1,1);
\end{tikzpicture}  \\
10 & $p_{_1}p_{_3}$ & \begin{tikzpicture}[scale=0.2]
   \draw (0,0) rectangle +(1,1);
   \draw (1,0) rectangle +(1,1);
   \draw (2,0) rectangle +(1,1);
   \draw (0,1) rectangle +(1,1);
\end{tikzpicture}  \\
\hline
\end{tabular}
\label{}
\end{table}

\begin{remark}
$\mathcal{F}(2^k)$ will be a vertical $k \times 1$ figure.
$\mathcal{F}(p_k)$ will be a horizontal $1 \times k$ figure, where $p_k$ is the $k$th prime.
\end{remark}

\

Let $f, g$ be two Ferrers diagrams. We denote by $f \subseteq g$ that $f$ is a subfigure of $g$. More precisely, we will take $f \subseteq g$ to mean that figure $g$ covers figure $f$ without any protruding squares upon aligning their bottom left corners. Note that the relation $\subseteq$ is a partial ordering on $\mathcal{F}(\mathbf{N})$, with equality occurring precisely when $f$ and $g$ are identical figures. Equality of figures $f=g$ means that $f$ and $g$ have the same row lengths with the same multiplicities.

Since $\mathcal{F}$ is a bijective mapping from the naturals to the set of Ferrers diagrams, we have that $\mathcal{F}^{-1}$ is a well-defined mapping which sends a Ferrers diagram to its corresponding integer.

\section{Bounding $\pi(x)$}

\begin{lemma}\label{subfigures1}
For figures $f, g \in \mathcal{F}(\mathbf{N})$ suppose $f \subseteq g$. Then $\mathcal{F}^{-1}(f) \leq \mathcal{F}^{-1}(g)$ as natural numbers.
\end{lemma}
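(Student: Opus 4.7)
My plan is to reduce the subfigure relation $f \subseteq g$ to a coordinate-wise inequality between the row lengths of $f$ and $g$, then read off $\mathcal{F}^{-1}(f)$ and $\mathcal{F}^{-1}(g)$ as products of primes indexed by those row lengths, and finally invoke monotonicity of the sequence of primes.

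First, I would set up notation. By the construction of $\mathcal{F}$, any figure in $\mathcal{F}(\mathbf{N})$ is a left-justified diagram whose row lengths are weakly increasing from bottom to top (equivalently, weakly decreasing from top to bottom within each group of equal rows, with strict jumps between groups), because the $\alpha_i$ are strictly increasing. So I would list the row lengths of $f$ and $g$ from bottom to top as
\[
a_1 \geq a_2 \geq \cdots \geq a_k \qquad \text{and} \qquad b_1 \geq b_2 \geq \cdots \geq b_\ell.
\]
The definition of $f \subseteq g$ (containment after bottom-left alignment, with no protruding square) translates immediately into $k \leq \ell$ together with $a_i \leq b_i$ for every $1 \leq i \leq k$: the $i$-th row of $f$ from the bottom must be covered by the $i$-th row of $g$ from the bottom.

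Next, I would use the definition of $\mathcal{F}$ to recover the underlying integer. Each row of length $r$ in a figure contributes exactly one factor of $p_r$ to the corresponding integer, since $\beta_i$ rows of length $\alpha_i$ encode the factor $p_{\alpha_i}^{\beta_i}$. Hence
\[
\mathcal{F}^{-1}(f) \;=\; \prod_{i=1}^{k} p_{a_i}, \qquad \mathcal{F}^{-1}(g) \;=\; \prod_{i=1}^{\ell} p_{b_i}.
\]

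Finally, I would conclude with two monotonicity observations. Because the prime-indexing sequence $p_1 < p_2 < p_3 < \cdots$ is strictly increasing, $a_i \leq b_i$ yields $p_{a_i} \leq p_{b_i}$, so $\prod_{i=1}^{k} p_{a_i} \leq \prod_{i=1}^{k} p_{b_i}$. Then, since every prime is at least $2 > 1$, appending the remaining factors $p_{b_{k+1}}, \ldots, p_{b_\ell}$ can only increase the product, giving $\prod_{i=1}^{k} p_{b_i} \leq \prod_{i=1}^{\ell} p_{b_i}$. Chaining these inequalities yields $\mathcal{F}^{-1}(f) \leq \mathcal{F}^{-1}(g)$, as required.

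There is no real obstacle here beyond bookkeeping; the only point to handle carefully is that the subfigure relation must be matched row-by-row from the bottom (not sorted independently on the two sides), and that one must separately account for the case $k < \ell$ so that the extra primes in $g$ are safely absorbed using $p_r \geq 2$.
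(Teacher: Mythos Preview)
Your proof is correct and follows essentially the same approach as the paper's: translate $f \subseteq g$ into the row-wise conditions $k \leq \ell$ and $a_i \leq b_i$, then invoke monotonicity of the prime sequence to compare $\prod_{i} p_{a_i}$ with $\prod_{i} p_{b_i}$, your explicit product formula making the argument slightly cleaner than the paper's informal case split. One cosmetic slip: you describe the row lengths as ``weakly increasing from bottom to top'' but then list them as $a_1 \geq a_2 \geq \cdots$ from the bottom; the latter (weakly \emph{decreasing} from bottom to top) is what matches the paper's construction and what your argument actually uses, so the mathematics is unaffected.
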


\begin{proof}
If $f=g$ we of course get $\mathcal{F}^{-1}(f) = \mathcal{F}^{-1}(g)$. We now assume $f \neq g$ and so $f \subset g$. In other words, $f$ is a proper subfigure of $g$.
\
We index the row lengths of $f$ as $\{f_1, \cdots f_k\}$ and the row lengths of $g$ as $\{g_1, \cdots, g_s\}$. Since $f \subseteq g$, we must have both $k \leq s$ and $f_i \leq g_i$ for all $i \leq k$ (otherwise there is a protruding square upon aligning $g$ with $f$). Since we assumed $f \neq g$, we also conclude $k < s$ or there exists some $j$ such that $f_j < g_j$.

If $k < s$, then $\mathcal{F}^{-1}(g)$ has strictly more prime divisors than $\mathcal{F}^{-1}(f)$ which are of the same or greater size since $f_i \leq g_i$ for all $i \leq s$. Therefore, $\mathcal{F}^{-1}(f) < \mathcal{F}^{-1}(g)$.

Now assume there exists some $j$ such that $f_j < g_j$. We know $\mathcal{F}^{-1}(g)$ has at least as many prime divisors as $\mathcal{F}^{-1}(f)$ which are of the same size or greater. By assumption, some $g_j$ is strictly greater than its counterpart $f_j$. Therefore, $\mathcal{F}^{-1}(f) < \mathcal{F}^{-1}(g)$.

\end{proof}

\begin{lemma}\label{subfigures2}
Let $f \in \mathcal{F}(\mathbf{N})$ be a rectangular figure with height $i$ and width $j$. The number of distinct subfigures of $f$ is ${i + j \choose j}$.
\end{lemma}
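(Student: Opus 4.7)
The plan is to reduce the count to a standard lattice path enumeration. First I would observe that, under the convention established above, every Ferrers diagram has weakly decreasing row lengths reading from bottom to top. Consequently, a subfigure of the rectangle of height $i$ and width $j$ is precisely a Ferrers diagram having at most $i$ rows, each of length at most $j$. Equivalently, the subfigures of the rectangle are in bijection with integer partitions that fit inside an $i \times j$ box, where the empty subfigure (corresponding to $\mathcal{F}(1)$) is identified with the empty partition.

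Next I would set up the classical staircase bijection between such partitions and monotone lattice paths from the bottom-right corner of the $i \times j$ rectangle to its top-left corner, using only unit left and up steps. Given a subfigure $f$, the boundary separating the cells of $f$ from the uncovered cells of the rectangle is exactly such a path. Conversely, any such path carves out a unique region whose row lengths are weakly decreasing from bottom to top, hence a unique subfigure. Care must be taken with the two extreme paths: the path hugging the top and right sides corresponds to the empty subfigure, while the path hugging the bottom and left sides corresponds to the full rectangle itself, so both degenerate cases are counted exactly once.

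Finally, I would conclude by counting the paths directly. Each path consists of a sequence of $i + j$ steps, of which exactly $j$ are horizontal and $i$ are vertical, and each such arrangement is realized by a unique path. Therefore the number of paths, and hence the number of subfigures, equals ${i+j \choose j}$.

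The main obstacle is not the counting itself, which is classical, but rather making the bijection between subfigures (as defined by alignment at the bottom-left corner without protruding squares) and partitions in the box airtight; once that identification is made, the lattice path argument is immediate.
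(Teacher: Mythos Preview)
Your argument is correct and complete: the identification of subfigures with partitions fitting in an $i\times j$ box is exactly what the alignment-at-the-bottom-left convention gives, and the staircase bijection with monotone lattice paths is standard and yields $\binom{i+j}{j}$ immediately.

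The paper takes a different but equally short route. Rather than passing through lattice paths, it observes directly that a subfigure is determined by the multiset of its row lengths: there are $i$ row slots, each of which receives a length from $\{0,1,\ldots,j\}$, and because the row lengths are weakly decreasing the choice is exactly a combination with repetition. The stars-and-bars formula $\binom{a+b-1}{b}$ then gives $\binom{i+j}{j}$. Your approach is more geometric and makes the connection to the well-known lattice-path enumeration explicit, which some readers may find more illuminating; the paper's approach is slightly more direct, needing no auxiliary bijection beyond the observation that an ordered weakly decreasing sequence is the same data as a multiset. Either way the combinatorics is classical, and the only substantive step---matching the paper's definition of ``subfigure'' with ``partition in a box''---is handled correctly in both.
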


\begin{proof}
We count the number of distinct subfigures of an $i \times j$ rectangle by viewing them as combinations of $j+1$ values $\{0, 1, \cdots, j\}$.  This set of values corresponds to the possible lengths of the rows, which are distributed with repetition throughout $i$ different row slots.
The formula for combinations with repetition given $a$ values and $b$ slots is ${a + b - 1 \choose b}$. Of course we have $a = i+1$ and $b = j$.
\end{proof}

\begin{theorem}\label{euclid}
Let $x$ be a positive real number. The following bound is satisfied: $$\pi(x) \geq \frac{\lfloor \lg x \rfloor}{\lg (\lfloor \lg x \rfloor + 1)}.$$
\end{theorem}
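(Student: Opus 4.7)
The plan is to bound the number of naturals up to $x$ by the number of Ferrers subfigures of a carefully chosen rectangle, then solve for $\pi(x)$. Set $k = \lfloor \lg x \rfloor$ (assume $x \geq 2$, since the claim is vacuous otherwise) and let $R$ denote the rectangle of height $k$ and width $\pi(x)$.

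First I would establish the geometric claim that $\mathcal{F}(n) \subseteq R$ for every $n \in \{1, 2, \ldots, \lfloor x \rfloor\}$. This follows from the decomposition $n = p_{\alpha_1}^{\beta_1}\cdots p_{\alpha_s}^{\beta_s}$ together with two observations. Each prime factor satisfies $p_{\alpha_i} \leq n \leq x$, so $\alpha_i \leq \pi(x)$ and every row of $\mathcal{F}(n)$ has width at most $\pi(x)$. Moreover, because each $p_{\alpha_i} \geq 2$, we have $n \geq 2^{\beta_1 + \cdots + \beta_s}$, forcing $\sum \beta_i \leq \lg n \leq \lg x$, which as an integer is at most $k$. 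Hence $\mathcal{F}(n)$ fits inside $R$.

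Since $\mathcal{F}$ is a bijection, the figures $\mathcal{F}(1), \ldots, \mathcal{F}(\lfloor x \rfloor)$ are pairwise distinct subfigures of $R$, so Lemma~\ref{subfigures2} yields
$$\lfloor x \rfloor \;\leq\; \binom{k + \pi(x)}{\pi(x)}.$$
I would then use the crude estimate $\binom{k + \pi(x)}{\pi(x)} \leq (k+1)^{\pi(x)}$, obtained by viewing the binomial as a count of multisets of size $\pi(x)$ drawn from $k+1$ symbols and bounding it by the corresponding $\pi(x)$-tuples. Taking base-$2$ logarithms gives $\lg \lfloor x \rfloor \leq \pi(x)\,\lg(k+1)$, and combining this with $\lfloor x \rfloor \geq 2^k$ (so $\lg \lfloor x \rfloor \geq k$) produces $\pi(x) \geq k/\lg(k+1)$, which is exactly the stated bound.

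The main obstacle is the bounding-box step: one has to extract both a uniform bound on row widths (via $\pi$) and an additive bound on the number of rows (via $\lg$) from the single multiplicative constraint $n \leq x$. Once $R$ is identified, Lemma~\ref{subfigures2} and the concluding logarithmic manipulation are routine. Notably this argument uses only Lemma~\ref{subfigures2}; Lemma~\ref{subfigures1} is not required for the lower bound itself.
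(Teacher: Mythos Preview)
Your argument is correct and follows the same route as the paper: confine every $\mathcal{F}(n)$ with $n\le x$ to the $\lfloor\lg x\rfloor\times\pi(x)$ rectangle, apply Lemma~\ref{subfigures2}, bound the resulting binomial by $(k+1)^{\pi(x)}$, and rearrange $2^k\le(k+1)^{\pi(x)}$. The only differences are cosmetic. The paper establishes the bounding box by invoking Lemma~\ref{subfigures1} (identifying $\mathcal{F}(2^h)$ and $\mathcal{F}(p_w)$ as the minimal figures of given height and width, respectively), whereas you read off the height and width bounds directly from the factorization $n=\prod p_{\alpha_i}^{\beta_i}$; your version is a bit more self-contained and, as you note, makes Lemma~\ref{subfigures1} unnecessary for this theorem. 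The paper also obtains $\binom{h+w}{w}\le(h+1)^w$ via the telescoping product $(h+1)(\tfrac{h}{2}+1)\cdots(\tfrac{h}{w}+1)$ rather than your multiset-versus-tuple count, but the two justifications yield the same inequality.
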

\begin{proof}
We let $\lfloor x \rfloor$ denote the integer part of $x$. Let $M = \{1, 2, \cdots,  \lfloor x \rfloor \}$ denote the set of the first $\lfloor x \rfloor$ integers. The number of primes in $M$ is given by $\pi(x)$. Consider the set $\mathcal{F}(M)$ of figures corresponding to each of the naturals in $M$. 

We say a figure's \textit{height} is its total number of rows. The maximum height attained by any figure in $M$ is then $\lfloor \lg x \rfloor$. Letting $h := \lfloor \lg x \rfloor$, we get that the maximum height over $M$ is attained by the figure $\mathcal{F}(2^h)$. This follows from Lemma~\ref{subfigures1} since with respect to the partial ordering $\left(\subseteq, \ \mathcal{F}(\mathbf{N})\right)$ the figure $\mathcal{F}(2^h)$ is the least figure of height $\geq h$. 

We say a figure's \textit{width} is the length of its longest row. Then the maximum width attained by any figure in $M$ is precisely $\pi(x)$. Letting $w := \pi(x)$, we get that the maximum width is attained by the figure $\mathcal{F}(p_w)$, which is similarly a consequence of Lemma~\ref{subfigures1}.

Therefore, with respect to the partial ordering $\left(\subseteq, \ \mathcal{F}(\mathbf{N})\right)$ all of the figures in $M$ are bounded above by a rectangular figure with dimensions $h \times w$. 

Using Lemma~\ref{subfigures2} and the fact that the sets $M$ and $\mathcal{F}(M)$ have the same number of elements we are able to write

\begin{align*}
2^h \leq |M| = |\mathcal{F}(M)| & \leq {h + w \choose w} \\
& = \frac{(h+w)!}{h!w!} \\
& = \frac{1}{w!}(h+1)(h+2) \cdots (h+w) \\
& = (h+1)(\frac{h+2}{2}) \cdots (\frac{h+w}{w}) \\
& = (h+1)(\frac{h}{2} + 1) \cdots (\frac{h}{w} + 1) \\
& \leq (h+1)^w.
\end{align*}

Recalling that $h = \lfloor \lg x \rfloor$ and $w = \pi(x)$ and rearranging the inequality $2^h \leq (h+1)^w$ yields 

\begin{equation}\label{estimate}
\pi(x) \geq \frac{\lfloor \lg x \rfloor}{\lg (\lfloor \lg x \rfloor + 1)}.  
\end{equation}
\end{proof}

\printbibliography

\end{document}